\newtheorem{theorem}{Theorem}[section]
\newtheorem{corollary}[theorem]{Corollary}
\newtheorem{proposition}[theorem]{Proposition}
\newtheorem{example}[theorem]{Example}
\newtheorem{question}[theorem]{Question}
\newtheorem{remark}[theorem]{Remark}
\begin{document}

\title[Continuous extension of functions from countable sets]{Continuous extension of functions from countable sets}

%    Information for second author
\author{V.Mykhaylyuk}
\address{Department of Mathematics\\
Chernivtsi National University\\ str. Kotsjubyn'skogo 2,
Chernivtsi, 58012 Ukraine}
\email{vmykhaylyuk@ukr.net}
%\thanks{Support information for the second author.}

%    General info
\subjclass[2000]{Primary 54C20, 54C35, Secondary 46E10, 54C05, 54C45}

%\date{January 1, 1994 and, in revised form, June 22, 1994.}

\commby{Ronald A. Fintushel}

%\dedicatory{This paper is dedicated to our authors.}

\keywords{extension property, linear extender, $C$-embedding, retract, stratifiable space, $P$-point, Stone-$\breve{C}$ech compactification}

\begin{abstract}
   We give a characterization of countable discrete subspace $A$ of a topological space $X$ such that there exists a (linear) continuous mapping $\varphi:C_p^*(A)\to C_p(X)$ with $\varphi(y)|_A=y$ for every $y\in C_p^*(A)$. Using this characterization we answer two questions of A.~Arhangel'skii. Moreover, we introduce the notion of well-covered subset of a topological space and prove that for well-covered functionally closed subset $A$ of a topological space $X$ there exists a linear continuous mapping $\varphi:C_p(A)\to C_p(X)$ with $\varphi(y)|_A=y$ for every $y\in C_p(A)$.

\end{abstract}

\maketitle
\section{Introduction}

For a topological space $X$ we denote the space of all continuous function $y:X\to\mathbb R$ with the topology of pointwise convergence by $C_p(X)$, and the subspace of all continuous bounded function $y:X\to\mathbb R$ is denoted by $C^*_p(X)$.

According to the well-known Tietze-Urysohn theorem, for a normal space $X$ and a closed subset $A$ of $X$ there exists a mapping $\varphi: C(A)\to C(X)$ such that $\varphi(y)|_A=y$ for every $y\in C(A)$. The existence and properties (linearity, continuity with respect different topologies, etc.) of such extender $\varphi: C(A)\to C(X)$ for various classes of spaces $X$ were investigated by many mathmeticians (see, for instance, \cite{Du}, \cite{Bor}, \cite{Sen}, \cite{St}, \cite{SV}, \cite{GHO}, \cite{Yam} and literature given there). In particular, the existence of a linear continuous extender $\varphi: C_p(A)\to C_p(X)$ for every closed subset $A$ of a stratifiable space $X$ was obtained in \cite[Theorem 4.3]{Bor} and  the existence of such extender for every closed subset $A$ of locally compact generalized ordered space $X$ was proved in \cite[Corollary 1]{GHO}).

The following questions were published in \cite{Arh} (see also Questions 4.10.3 and 4.10.11 from \cite{Tkachuk}).

\begin{question}\label{q:1.1}
Let $X$ be a pseudocompact space such that for any countable set $A\subseteq X$ there exists a (linear) continuous map $\varphi:C_p^*(A)\to C_p(X)$ with $\varphi(y)|_A=y$ for every $y\in C_p(A)$. Must $X$ be finite?
\end{question}

\begin{question}\label{q:1.2}
Let $X$ be the subspace of all weak $P$-points of $\beta\omega\setminus\omega$. It is true that for any countable set $A\subseteq X$ there exists a (linear) continuous map $\varphi:C_p^*(A)\to C_p(X)$ such that $\varphi(y)|_A=y$ for every $y\in C_p(A)$?
\end{question}

A point $x$ of a topological space $X$ is called {\it an weak $P$-point} if $x\not \in \overline{A}$ for every countable set $A\subseteq X\setminus \{x\}$.

In this paper we give a characterization of countable discrete subspace $A$ of a topological space $X$ for which exists a (linear) continuous mapping $\varphi:C_p^*(A)\to C_p(X)$ with $\varphi(y)|_A=y$ for every $y\in C_p^*(A)$. Using this characterization we obtain the positive answer to Question \ref{q:1.1} and the negative answer to Question \ref{q:1.2}. Moreover, we introduce the notion of well-covered subset of a topological space and prove that for well-covered functionally closed subset $A$ of a topological space $X$ there exists a linear continuous mapping $\varphi:C_p^*(A)\to C_p(X)$ with $\varphi(y)|_A=y$ for every $y\in C_p^*(A)$.

\section{Countable $C$-embedding sets}

The next property is probably well-known (see \cite[Corollary 1]{Sen}).

\begin{proposition}\label{p:3.1} Let $X$ be a completely regular space and $A\subseteq X$ such that there exists a continuous mapping $\varphi:C_p^*(A)\to C_p(X)$ such that $\varphi(y)|_A=y$ for every $y\in C_p^*(A)$. Then the set $A$ is closed in $X$. \end{proposition}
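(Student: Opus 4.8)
The plan is to argue by contradiction: suppose $x_0 \in \overline{A} \setminus A$ and use the continuity of $\varphi$ at the zero function to produce two bounded functions on $A$ that are "close" in $C_p^*(A)$ but whose extensions are forced to disagree near $x_0$, contradicting continuity (or, more cleanly, contradicting the fact that $\varphi(0) = 0$ while the extension of a function supported near $x_0$ cannot vanish at $x_0$). Concretely, since $\varphi$ is continuous at $0 \in C_p^*(A)$ and $\varphi(0)|_A = 0$, by completely regularity we may assume (replacing $\varphi(0)$ by the zero function is not automatic, so instead) work with the evaluation map $e_{x_0}\circ\varphi : C_p^*(A)\to\mathbb R$, which is continuous. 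A basic neighbourhood of $0$ in $C_p^*(A)$ has the form $W = \{y \in C_p^*(A) : |y(a_i)| < \delta,\ i=1,\dots,n\}$ for finitely many points $a_1,\dots,a_n \in A$ and some $\delta > 0$. Continuity gives such a $W$ with $|\varphi(y)(x_0) - \varphi(0)(x_0)| < \tfrac12$ for all $y \in W$.

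Next I would exploit that $x_0 \notin A$ and that $A \setminus \{a_1,\dots,a_n\}$ still clusters at $x_0$ (it does, since removing finitely many points does not change whether a point is in the closure — here I should be a little careful: $x_0\in\overline A\setminus A$ and $A$ infinite, so $x_0\in\overline{A\setminus F}$ for every finite $F\subseteq A$). Using complete regularity in $X$, for each finite $F$ and each neighbourhood $U$ of $x_0$ I can pick a point $a \in (A\cap U)\setminus F$ and a continuous function $h:X\to[0,1]$ with $h(a)=1$, $h$ supported off $\{a_1,\dots,a_n\}$; then $y := h|_A \in W$ but I want to also control $\varphi(y)(x_0)$. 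The cleanest route is: choose $y\in C_p^*(A)$ with $y(a_i)=0$ for all $i$ (so $y\in W$) yet $y$ equals a large constant, say $1$, on the part of $A$ near $x_0$; by continuity of $\varphi(y)$ at $x_0$ and the fact that $\varphi(y)$ agrees with $y$ on $A$, and that points of $A$ arbitrarily close to $x_0$ carry the value $1$, we get $\varphi(y)(x_0) = 1$. But $\varphi(0)(x_0) = 0$ and $y\in W$, so $|\varphi(y)(x_0)-\varphi(0)(x_0)| = 1 > \tfrac12$ — contradiction.

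The key technical point, and the place where complete regularity is genuinely used, is the construction of the function $y$: we need $y \in C_p^*(A)$ that vanishes at the finitely many prescribed points $a_1,\dots,a_n$ but is bounded below by a fixed positive constant on a set of points of $A$ accumulating at $x_0$. Since $A$ need not be discrete or nice, I would build $y$ as $y = g|_A$ where $g:X\to[0,1]$ is continuous with $g(x_0)=1$ — such $g$ exists by complete regularity once we have separated $x_0$ from the closed set... but wait, $\{a_1,\dots,a_n\}$ together with the issue that $x_0\in\overline A$: the correct move is to take $g$ with $g(x_0)=1$ and $g(a_i)$ small; this requires only that $x_0\neq a_i$, which holds since $x_0\notin A$. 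Then $y=g|_A$ is continuous and bounded, $|y(a_i)|<\delta$ can be arranged by scaling $g$ near the $a_i$ (complete regularity separates $x_0$ from the finite set $\{a_1,\dots,a_n\}$, giving $g$ with $g(x_0)=1$, $g(a_i)=0$), so $y\in W$; and $\varphi(y)$ is continuous on $X$, agrees with $y$ on $A$, hence $\varphi(y)(x_0)=\lim$ of $y$-values along a net in $A$ converging to $x_0$. If those values can be made to stay near $1$ we are done. I expect the main obstacle to be precisely this last point — ensuring $g$ (equivalently $y$) stays bounded away from $0$ on enough of $A$ near $x_0$, not just equal to $1$ at $x_0$ itself — which is handled by shrinking to a neighbourhood $U$ of $x_0$ on which $g>\tfrac12$ and using $x_0\in\overline{A\cap U}$.
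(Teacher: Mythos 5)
Your proposal is correct and follows essentially the same route as the paper: assume $x_0\in\overline{A}\setminus A$, use continuity of $\varphi$ at $0$ (noting $\varphi(0)(x_0)=0$ since $\varphi(0)$ vanishes on $A$ and $x_0\in\overline A$) to get a finite control set, then use complete regularity to build $y$ vanishing on that finite set but bounded below by $\tfrac12$ on the trace of $A$ in a neighbourhood of $x_0$, forcing $\varphi(y)(x_0)\ge\tfrac12$ against the continuity estimate. The only cosmetic difference is that the paper makes $y$ identically $1$ on $A\setminus G$ for an open $G\supseteq B$ with $x_0\notin\overline G$, whereas you take $y=g|_A$ with $g(x_0)=1$ and pass to the set where $g>\tfrac12$; both are the same use of complete regularity.
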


\begin{proof} Suppose that $x_0\in \overline{A}\setminus A$. Let $y_0(x)=0$ for every $x\in A$ and $z_0=\varphi(y_0)$. Clearly that $z_0(x_0)=0$. Consider the neighborhood $W_0=\{z\in C_p(X):z(x_0)<\tfrac12\}$ of $z_0$ in $C_p(X)$ and choose an finite set $B\subseteq A$ such that $\varphi(y)\in W_0$ for every $y\in C_p^*(A)$ with $y|_B=y_0|_B$. We choose an open in $X$ set $G$ such that $B\subseteq G\cap A$ and $x_0\not \in \overline{G}$. There exists a continuous function $y\in C_p^*(A)$ such that $y(x)=0$ for every $x\in B$ and $y(x)=1$ for every $x\in A\setminus G$. It easy to see that $y|_B=y_0|_B$ and $\varphi(y)\not\in W_0$, which implies a contradiction.
\end{proof}

A set $A$ in a topological space $X$ is called {\it strongly functionally discrete in $X$} if there exists a discrete family $(G_a;a\in A)$ of functionally open sets $G_a\ni a$.

\begin{proposition}\label{p:2.1} Let $X$ be a topological space, $A=\{a_n:n\in\mathbb N\}\subseteq X$ be a countable set, $Y$ be a compact and $f:X\times Y\to\mathbb R$ be a separately continuous function such that the continuous mapping $\varphi:Y\to C_p(A)$, $\varphi(y)(a)=f(a,y)$, is a homeomorphic embedding and for every $n\in\mathbb N$ there exist $y'_n, y_n''\in Y$ with $|f(a_k,y_n')-f(a_k,y_n'')|\leq \tfrac{1}{n+1}$ for all $k=1,\dots , n$ and $|f(a_k,y_n')-f(a_k,y_n'')|\geq 1$ for all $k> n$. Then the set $A$ is a strongly functionally discrete in $X$.
\end{proposition}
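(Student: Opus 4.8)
The plan is to build, out of the pairs $(y'_n,y''_n)$, a single continuous real function on $X$ whose values at the points $a_m$ form a strictly increasing sequence whose limit is attained at no point of $X$, and then take the $G_m$ to be preimages of small pairwise disjoint intervals around those values. Set $g_n(x)=f(x,y'_n)-f(x,y''_n)$; by separate continuity of $f$ each $g_n$ is continuous on $X$, and the hypothesis gives $|g_n(a_k)|\le\frac1{n+1}$ for $k\le n$ and $|g_n(a_k)|\ge1$ for $k>n$. Writing $h_n=\min(|g_n|,1)$, a continuous $[0,1]$-valued function, we get $h_n(a_k)=1$ for $k<n$ and $h_n(a_k)=|g_n(a_k)|\le\frac1{n+1}$ for $k\ge n$.

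The step I expect to carry the real weight is the claim that $g_n(x)\to0$ for every $x\in X$, and this is where compactness of $Y$ and injectivity of $\varphi$ enter. First I would show that every cluster point $(u,v)$ of the sequence $\bigl((y'_n,y''_n)\bigr)_n$ in the compact space $Y\times Y$ lies on the diagonal: for fixed $k$ the map $(y,z)\mapsto f(a_k,y)-f(a_k,z)$ is continuous on $Y\times Y$ (a difference of compositions of a coordinate projection with the continuous function $f(a_k,\cdot)$) and has modulus $\le\frac1{n+1}$ at $(y'_n,y''_n)$ whenever $n\ge k$, so comparing $(u,v)$ with nearby terms of the sequence forces $f(a_k,u)=f(a_k,v)$; as this holds for all $k$, $\varphi(u)=\varphi(v)$, hence $u=v$. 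Now fix $x_0\in X$: the map $(y,z)\mapsto f(x_0,y)-f(x_0,z)$ is continuous on $Y\times Y$ and vanishes on the diagonal, so if $|g_n(x_0)|\ge\varepsilon$ held along an infinite set of indices, a cluster point of the corresponding subsequence of $\bigl((y'_n,y''_n)\bigr)_n$ would exist by compactness, lie on the diagonal by the previous sentence, and contradict continuity. Hence $g_n(x_0)\to0$.

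Granting this, put $\phi=\sum_{n\ge1}2^{-n}h_n$; the series converges uniformly, so $\phi\colon X\to[0,1]$ is continuous, and since for each $x$ some $h_n(x)<1$ we have $\phi(x)<1$, i.e.\ $\phi(X)\subseteq[0,1)$. From the values of $h_n$ on $A$, $\phi(a_m)=\sum_{n=1}^{m-1}2^{-n}+\sum_{n\ge m}2^{-n}|g_n(a_m)|$, and an elementary estimate using $|g_n(a_m)|\le\frac1{n+1}$ for $n\ge m$ places $t_m:=\phi(a_m)$ in the half-open interval $[\,1-2^{1-m},\,1-2^{-m})$; in particular $t_1<t_2<\cdots$ and $t_m\to1$. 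Choose $r_m>0$ with $r_m+r_{m+1}<t_{m+1}-t_m$ for all $m$ (possible since each of these differences is positive), so that the intervals $(t_m-r_m,t_m+r_m)$ have pairwise disjoint closures — consecutive ones by the choice of the $r_m$, the rest because the $t_m$ increase — and set $G_m=\phi^{-1}\bigl((t_m-r_m,t_m+r_m)\bigr)$.

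Finally, $G_m=\{x:r_m-|\phi(x)-t_m|>0\}$ is functionally open, and $\phi(a_m)=t_m$ gives $a_m\in G_m$. For discreteness, let $x_0\in X$ and $t=\phi(x_0)\in[0,1)$; since $t_m\to1$ and $r_m\to0$, all but finitely many of the intervals $(t_m-r_m,t_m+r_m)$ are disjoint from a small enough neighbourhood $(t-\delta,t+\delta)$ of $t$, and among the remaining finitely many their disjoint closures contain $t$ for at most one index, so after shrinking $\delta$ to avoid the other closures, $\phi^{-1}\bigl((t-\delta,t+\delta)\bigr)$ is a neighbourhood of $x_0$ meeting $G_m$ for at most one $m$. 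Thus $(G_m)_m$ is a discrete family of functionally open sets with $a_m\in G_m$, and $A$ is strongly functionally discrete in $X$. The genuinely delicate point is the pointwise convergence $g_n\to0$ in the second paragraph; everything after it is routine bookkeeping with a geometric series.
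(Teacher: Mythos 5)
Your proof is correct, but it takes a genuinely different route from the paper's. Both arguments are built from the functions $g_n(x)=f(x,y_n')-f(x,y_n'')$, but the paper never sums them: it sets $U_n=\bigcap_{k=1}^{n-1}\{x:|g_k(x)|>\frac56\}\cap\{x:|g_n(x)|<\frac23\}$, checks $a_n\in U_n$ directly from the hypotheses, proves local finiteness by a uniform-continuity argument (identifying $Y$ with a compact subspace of $C_p(A)$ via the embedding, continuity of $y\mapsto f(x_0,y)$ yields an $n_0$ such that the neighbourhood $\{x:|g_{n_0}(x)|<\frac12\}$ of $x_0$ misses every $U_n$ with $n>n_0$), and then upgrades local finiteness to discreteness using that the slightly larger functionally closed sets $F_n$ are pairwise disjoint. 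You instead encode the whole family in the single continuous function $\phi=\sum_n 2^{-n}\min(|g_n|,1)$ and pull back disjoint intervals around the values $t_m=\phi(a_m)$; the analytic heart of your version is the pointwise convergence $g_n(x)\to 0$, which you obtain from compactness of $Y\times Y$ together with injectivity of $\varphi$ (cluster points of $\bigl((y_n',y_n'')\bigr)_n$ lie on the diagonal), whereas the paper uses the homeomorphic embedding only to transfer the uniformity of $C_p(A)$ to $Y$. Your route costs some extra bookkeeping with the series and the radii $r_m$, but it isolates a clean and reusable lemma ($g_n\to 0$ pointwise on all of $X$); the paper's construction is shorter and needs no limit statement. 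One slip to correct: after introducing $h_n=\min(|g_n|,1)$ you assert $h_n(a_k)=1$ for $k<n$ and $h_n(a_k)\le\frac{1}{n+1}$ for $k\ge n$; the two cases are swapped (it should be $k>n$ and $k\le n$ respectively). Your subsequent formula $\phi(a_m)=\sum_{n=1}^{m-1}2^{-n}+\sum_{n\ge m}2^{-n}|g_n(a_m)|$ already uses the correct values, so this is only a typo and does not affect the argument.
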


\begin{proof} Without loss of generality we can supose that $Y\subseteq C_p(A)$. For every $n\in\mathbb N$ we put
$$
U_n=\bigcap_{k=1}^{n-1}\{x\in X:|f(x,y_k')-f(x,y_k'')|>\tfrac56\}\bigcap\{x\in X:|f(x,y_n')-f(x,y_n'')|<\tfrac23 \}.
$$
Clearly, $a_n\in U_n$ and $U_n$ is functionally open in $X$ for every $n\in\mathbb N$.

We show that $(U_n)_{n=1}^{\infty}$ is discrete in $X$. Fix $x_0\in X$. Since $Y\subseteq C_p(A)$ is a Hausdorff compact space and the mapping $f^{x_0}:Y\to\mathbb R$ is continuous, there exists $n_0\in\mathbb N$ such that $|f(x_0,y')-f(x_0,y'')|<\tfrac12$ for every $y',y''\in Y$ with $|y'(a_k)-y''(a_k)|<\tfrac{1}{n_0}$ for every $k=1,\dots n_0$. Then for the neighborhood
$$
U_0=\{x\in X:|f(x,y_{n_0}')-f(x,y_{n_0}'')<\tfrac12|\}
$$
of $x_0$ in $X$ we have $U_0\cap U_n=\emptyset$ for every $n>n_0$. Thus, $(U_n)_{n=1}^{\infty}$ is locally finite in $X$. It remains to use that the sequence $(F_n)_{n=1}^\infty$ of the sets
$$
F_n=\bigcap_{k=1}^{n-1}\{x\in X:|f(x,y_k')-f(x,y_k'')|\geq\tfrac56\}\bigcap\{x\in X:|f(x,y_n')-f(x,y_n'')|\leq\tfrac23 \}
$$
is disjoint.
\end{proof}

\begin{theorem}\label{th:2.2} Let $X$ be a topological space and $A\subseteq X$ be a discrete countable subspace of $X$. Then the following conditions are equivalent

$(i)$\,\, there exists a (linear) continuous mapping $\varphi:C_p^*(A)\to C_p^*(X)$ such that $\varphi(y)|_A=y$ for every $y\in C_p^*(A)$;

$(ii)$\,\, there exists a (linear) continuous mapping $\varphi:C_p(A)\to C_p(X)$ such that $\varphi(y)|_A=y$ for every $y\in C_p(A)$;

$(iii)$\,\, there exists a (linear) continuous mapping $\varphi:C_p^*(A)\to C_p(X)$ such that $\varphi(y)|_A=y$ for every $y\in C_p^*(A)$;

$(iv)$\,\,the set $A$ is strongly functionally discrete in $X$.
\end{theorem}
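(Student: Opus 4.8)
The plan is to prove the cycle $(iv)\Rightarrow(i)\Rightarrow(iii)\Rightarrow(iv)$ together with $(iv)\Rightarrow(ii)\Rightarrow(iii)$, so that all four conditions coincide. The implications $(i)\Rightarrow(iii)$ and $(ii)\Rightarrow(iii)$ need no work: $C_p^*(X)$ sits inside $C_p(X)$, and the restriction to the linear subspace $C_p^*(A)\subseteq C_p(A)$ of a (linear) continuous map into $C_p(X)$ is again (linear) continuous. So the content is concentrated in $(iii)\Rightarrow(iv)$ and in the construction for $(iv)\Rightarrow(i),(ii)$.

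For $(iii)\Rightarrow(iv)$ I would write $A=\{a_n:n\in\mathbb N\}$ and feed Proposition \ref{p:2.1} with the right data. Because $A$ is discrete, every function $A\to[0,1]$ is continuous and bounded, so $Y:=[0,1]^A\subseteq C_p^*(A)$, and $Y$ is compact by Tychonoff. Put $f(x,y)=\varphi(y)(x)$ for $x\in X$, $y\in Y$. Then $f$ is separately continuous: for fixed $y$ it is $\varphi(y)\in C_p(X)$, and for fixed $x$ it is the composition of $\varphi|_Y:Y\to C_p(X)$ with the evaluation $z\mapsto z(x)$, hence continuous. The induced map $Y\to C_p(A)$, $y\mapsto(a\mapsto f(a,y))=(a\mapsto y(a))$, is just the identity embedding of $Y$ into $C_p(A)$, so it is a homeomorphic embedding. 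Finally, for each $n$ I would take $y_n'\equiv0$ and $y_n''$ the indicator of $\{a_k:k>n\}$; both belong to $Y$, and $|f(a_k,y_n')-f(a_k,y_n'')|=|y_n''(a_k)|$ is $0$ for $k\le n$ and $1$ for $k>n$. Proposition \ref{p:2.1} then gives that $A$ is strongly functionally discrete in $X$, i.e. $(iv)$.

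For $(iv)\Rightarrow(i)$ and $(iv)\Rightarrow(ii)$ one explicit linear extender works for both. Let $(G_n)_{n\in\mathbb N}$ be a discrete family of functionally open sets with $a_n\in G_n$. Discreteness gives that each point of $X$ lies in at most one $G_n$; in particular $a_m\notin G_n$ for $n\ne m$. Write $G_n=\{u_n>0\}$ with $u_n:X\to[0,\infty)$ continuous and set $g_n=\min\{u_n/u_n(a_n),1\}$, so that $g_n:X\to[0,1]$ is continuous with $g_n(a_n)=1$ and $\{g_n\ne0\}=G_n$. Define $\varphi(y)=\sum_n y(a_n)g_n$. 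On a neighbourhood of any point this sum reduces to one term (or to $0$) because the family $(G_n)$ is locally finite, so $\varphi(y)\in C(X)$; moreover $|\varphi(y)(x)|\le\sup_n|y(a_n)|$, hence $\varphi$ carries $C_p^*(A)$ into $C_p^*(X)$ as well. The map $\varphi$ is visibly linear, satisfies $\varphi(y)(a_m)=y(a_m)$, and is continuous for the pointwise topologies since for each fixed $x\in X$ the number $\varphi(y)(x)$ is either $0$ or a fixed multiple of the single coordinate $y(a_{n(x)})$. This establishes $(iv)\Rightarrow(ii)$, and since the same $\varphi$ maps $C_p^*(A)$ into $C_p^*(X)$, also $(iv)\Rightarrow(i)$.

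The one point that requires a moment's thought is the choice made in applying Proposition \ref{p:2.1}: taking $Y$ to be the whole cube $[0,1]^A$ is what makes $Y$ compact while still sitting inside $C_p^*(A)$, and this is legitimate exactly because $A$ is discrete; with the indicator functions $y_n''$ the remaining hypotheses are immediate. Everything else — the local finiteness argument for $\varphi(y)\in C(X)$, linearity, and pointwise continuity of $\varphi$ — is routine.
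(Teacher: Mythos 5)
Your proof is correct and follows essentially the same route as the paper: the implications to $(iii)$ are immediate, $(iii)\Rightarrow(iv)$ is obtained by applying Proposition~\ref{p:2.1} to a compact cube of functions on the discrete set $A$ (the paper takes $Y=C_p(A,\{0,1\})=\{0,1\}^A$ and you take $[0,1]^A$, but your witnesses $y_n',y_n''$ are the same indicator functions), and $(iv)\Rightarrow(i),(ii)$ uses the identical extender $\varphi(y)=\sum_a \varphi_a(\cdot)\,y(a)$ built from a subordinate family of bump functions. You merely spell out the verification of the hypotheses of Proposition~\ref{p:2.1} and of the local finiteness argument, which the paper leaves implicit.
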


\begin{proof} The implications $(i)\Rightarrow(iii)$ and $(ii)\Rightarrow(iii)$ are obvious.

$(iii)\Rightarrow(iv)$. Let $Y=C_p(A,\{0,1\})$ and let $f:X\times Y\to\mathbb R$ is defined by $$f(x,y)=\varphi(y)(x).$$
Clearly, $f$ is separately continuous and the set $A$ is strongly functionally discrete in $X$ according to Proposition \ref{p:2.1}.

$(iv)\Rightarrow(i)$ and $(iv)\Rightarrow(ii)$. Let $(G_a;a\in A)$ is a discrete family of functionally open sets $G_a\ni a$. For every $a\in A$ we choose a continuous mapping  $\varphi_a:X\to [0,1]$ with $\varphi_a^{-1}((0,1])=G_a$ and $\varphi_a(a)=1$. Notice that the mapping $\varphi:C_p(A)\to C_p(X)$,
$$\varphi(y)(x)=\sum_{a\in A}\varphi_a(x)y(a),$$
and its restriction $\varphi_{C^*_p(A)}$ are the required one.
\end{proof}

The following corollary gives the answers to Questions \ref{q:1.1} and \ref{q:1.2}.

\begin{corollary}\label{cor:2.3} Let $X$ be a topological space such that for every countable set $A\subseteq X$ there exists a continuous mapping $\varphi:C_p^*(A)\to C_p(X)$ with $\varphi(y)|_A=y$ for every $y\in C_p^*(A)$. Then

$(a)$\,\,every discrete countable subspace $A$ of $X$ is a strongly functionally discrete set in $X$;

$(b)$\,\,if $X$ is a $T_2$-space in which every locally finite system of functionally open sets is finite (in particular, if $X$ is a pseudocompact) then $X$ is finite;

$(c)$\,\,$X$ does not equal to the space of all weak $P$-points in $\beta\omega\setminus \omega$;

$(d)$\,\,if $X$ is a completely regular space, then every countable subspace $A$ of $X$ is a strongly functionally discrete set in $X$.
\end{corollary}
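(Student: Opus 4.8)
The plan is to derive all four items from Theorem \ref{th:2.2} together with the elementary Proposition \ref{p:3.1}. For item $(a)$: given a discrete countable subspace $A$ of $X$, the hypothesis supplies a continuous $\varphi:C_p^*(A)\to C_p(X)$ with $\varphi(y)|_A=y$. Since $\varphi$ maps into $C_p(X)$, this is exactly condition $(iii)$ of Theorem \ref{th:2.2} for this $A$, so condition $(iv)$ holds, i.e.\ $A$ is strongly functionally discrete in $X$. That is the whole argument for $(a)$, and it is the crux on which the rest rests.

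For item $(b)$: suppose $X$ is a $T_2$-space in which every locally finite system of functionally open sets is finite. If $X$ were infinite, then since $X$ is Hausdorff we may pick a countably infinite subset and, by a standard separation argument in a $T_2$-space, extract from it a countably infinite discrete subspace $A=\{a_n:n\in\mathbb N\}$ of $X$ (pick points and disjoint neighborhoods inductively). By $(a)$, $A$ is strongly functionally discrete, so there is a discrete — hence locally finite — infinite family $(G_a;a\in A)$ of nonempty functionally open sets, contradicting the hypothesis. Hence $X$ is finite. The parenthetical ``in particular, if $X$ is pseudocompact'' is justified by recalling that in a pseudocompact space every locally finite family of nonempty functionally open (cozero) sets must be finite: a countably infinite such family would carry an unbounded continuous function, contradicting pseudocompactness. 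This settles Question \ref{q:1.1}.

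For item $(c)$: let $Z$ be the subspace of all weak $P$-points of $\beta\omega\setminus\omega$. It is known that $Z$ is infinite (indeed dense-in-itself and nonempty, by Kunen's theorem on weak $P$-points) and that $Z$ is pseudocompact — being a dense subspace of the compact space $\beta\omega\setminus\omega$ containing all weak $P$-points, no continuous real-valued function on $Z$ can be unbounded, since any countable subset has no limit point inside itself yet its closure meets $Z$. Applying $(b)$ to $Z$ shows $Z$ cannot satisfy the hypothesis of the corollary, so $X$ (which \emph{does} satisfy it) is not equal to $Z$. This gives the negative answer to Question \ref{q:1.2}. For item $(d)$: if $X$ is completely regular and $A\subseteq X$ is an arbitrary countable subspace, first note by Proposition \ref{p:3.1} that $A$ is closed in $X$; moreover $A$ is itself a countable completely regular space, hence metrizable (a countable regular space is metrizable), hence the subspace $A$ carries a metric, and in particular every countable subspace of a completely regular space that admits such an extender must be \emph{discrete} — this is the one point needing care. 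Concretely: if $A$ had a non-isolated point $a_0$, pick $a_n\to a_0$ in $A$; then the two-point-gluing functions used in Proposition \ref{p:2.1} cannot be matched by any continuous extension near $a_0$, so no continuous $\varphi$ exists, contrary to hypothesis. Thus $A$ is discrete and countable, and $(a)$ applies to give that $A$ is strongly functionally discrete in $X$.

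The main obstacle I anticipate is item $(d)$: showing that the extender hypothesis forces every countable subspace to be discrete (so that Theorem \ref{th:2.2}, which assumes discreteness, becomes applicable). The clean way is to observe that if $A$ is not discrete, pick a non-isolated point $a_0\in A$ and a sequence from $A\setminus\{a_0\}$ clustering at $a_0$; then mimic the construction in the proof of Proposition \ref{p:3.1}, using a neighborhood $W_0$ of $\varphi(0)$ controlled by finitely many coordinates, to build a bounded $y$ on $A$ agreeing with $0$ on that finite set but forcing $\varphi(y)$ outside $W_0$ at $a_0$ — a contradiction. Once discreteness is in hand, $(d)$ reduces to $(a)$.
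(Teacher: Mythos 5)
Your overall architecture is the paper's: reduce everything to part $(a)$ via Theorem \ref{th:2.2}, use the fact that an infinite $T_2$-space contains an infinite discrete subspace for $(b)$, apply $(b)$ to the space of weak $P$-points for $(c)$, and use Proposition \ref{p:3.1} for $(d)$. Parts $(a)$ and $(b)$ are fine. But there are two genuine gaps.

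In $(c)$, your justification of pseudocompactness of the space $W$ of weak $P$-points does not work. Density of $W$ in $\beta\omega\setminus\omega$ proves nothing (a countable dense subset of $[0,1]$ is dense in a compactum but not pseudocompact), and the observation that a countable $A\subseteq W$ has no limit points in $W$ shows that every countable subset of $W$ is closed and discrete in $W$ --- which, if anything, is evidence \emph{against} countable compactness and is certainly not a proof that continuous functions on $W$ are bounded. Pseudocompactness of $W$ is a nontrivial theorem; the paper cites Garcia-Ferreira and Ortiz-Castillo for ultrapseudocompactness of $W$, and notes it also follows from the Kunen/van Mill fact that there is a weak $P$-point which is not a $P$-point. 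You need to cite one of these rather than argue it directly.

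In $(d)$, the claim ``a countable regular space is metrizable'' is false: $\omega\cup\{p\}$ for $p\in\beta\omega\setminus\omega$ is a countable Tychonoff space that is not first countable at $p$, so you cannot in general pick a sequence $a_n\to a_0$ from $A\setminus\{a_0\}$. Moreover, your sketched contradiction applies the extender of $A$ itself and builds $y$ on $A$ vanishing on the finite control set $B$; this breaks down when $a_0\in B$, since then the control forces $y(a_0)=0$ and no contradiction arises. The correct (and much shorter) argument, which is the paper's: the hypothesis holds for \emph{every} countable set, so by Proposition \ref{p:3.1} every countable subset of $X$ is closed; in particular, for $a\in A$ the countable set $A\setminus\{a\}$ is closed, hence $\{a\}$ is relatively open in $A$, hence $A$ is discrete, and $(a)$ applies. (Equivalently: if $a_0$ were non-isolated in $A$, then $a_0\in\overline{A\setminus\{a_0\}}\setminus(A\setminus\{a_0\})$ contradicts closedness of the countable set $A\setminus\{a_0\}$.) Your text gestures at $A\setminus\{a_0\}$ but never actually applies the hypothesis to that set, which is the one step that makes $(d)$ immediate.
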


\begin{proof} The statement $(a)$ follows immediately from Theorem \ref{th:2.2}.

$(b)$. It is enough to note that any infinite $T_2$-space has a countable discrete subspace.

$(c)$. According to \cite{GO} the space $W$ of all weak $P$-points in $\beta\omega\setminus \omega$ is ultrapseudocompact, in particular, pseudocompact. Moreover, pseudocompactness of $W$ easy follows from the next fact (see \cite{K}, \cite{vM}): {\it there exists a weak $P$-point $z$ in $\beta\omega\setminus \omega$ which is not a $P$-point in $\beta\omega\setminus \omega$} (a point $x$ of a topological space $X$ is called {\it a $P$-point} if $x\not \in \overline{A}$ for every $F_\sigma$-set $A\subseteq X\setminus \{x\}$).

$(d)$. According to Proposition \ref{p:3.1}, every countable subset $A\subseteq X$ is closed. It easy to see that every countable subset $A\subseteq X$ is discrete. Now it remains to use $(a)$.
\end{proof}

\begin{remark}\label{r:3.4}
Since the closure $\overline{A}$ in $\beta\omega$ of a countable set $A\subseteq \beta\omega$ is homeomorphic to $\beta A$ (see, for example, \cite[Corollary 3.2]{My}), every countable set $A\subseteq \beta\omega$ is $C^*$-embedding in $\beta\omega$. But for every infinite pseudocompact space $X\subseteq \beta\omega$ and every countable set $A\subseteq X$ does not exist a continuous mapping $\varphi:C_p^*(A)\to C_p(X)$ with $\varphi(y)|_A=y$ for every $y\in C_p^*(A)$ as follows from Corollary \ref{cor:2.3}(b).
\end{remark}

A space $X$ is called {\it a $P$-space} if every $x\in X$ is a $P$-point.

\begin{remark}\label{r:3.5}
It easy to see that every countable subset $A$ of a completely regular $P$-space $X$ is a strongly functionally discrete set in $X$. Hence, for every countable subset $A$ there exists a linear continuous mapping $\varphi:C_p^*(A)\to C_p(X)$ such that $\varphi(y)|_A=y$ for every $y\in C_p^*(A)$ according to Theorem \ref{th:2.2}.
\end{remark}

The following example shows that there exists a space $X$ with this property which is not a $P$-space.

\begin{example}
Let $S$ be an uncountable set and $X$ is the space of all function $x:S\to\{0,1\}$, i.e. $X=\{0,1\}^S$. Let $x_0(s)=0$ for every $s\in S$. The set $X\setminus \{x_0\}$ is open subset of $X$ equipped with the topology of uniform convergence on the countable subsets $T\subseteq S$. Moreover, the sets
$$
U(T,B)=(\{0,1\}^T\setminus B)\times \{0,1\}^{S\setminus T},
$$
where $T\subseteq S$ is a countable set and $B\subseteq \{0,1\}^T$ is a countable set with $x_0|_{T}\not\in B$, form a base of neighborhoods of $x_0$ in $X$.
It is easy to see that every countable set $A\subseteq X$ is strongly functionally discrete in $X$.

On other hand, let $\{t_n:n\in \omega\}\subseteq S$ be a countable set. Then the set
$$
G=\bigcap_{n\in \omega}\{0\}\times \{0,1\}^{S\setminus \{t_n\}}
$$
is a $G_\delta$-set in $X$ with $x_0\in G$, but $G$ is not a neighborhood of $x_0$. Thus, $x_0$ is not a $P$-point in $X$.
\end{example}

\section{Continuous extension from closed sets}

\begin{proposition}\label{p:4.2} Let $X$ be a topological space, $A\subseteq X$, $F\subseteq X$ be a functionally closed set and $G\subseteq X$ be a functionally open set such that $A\subseteq F\subseteq G$ and $A$ is a retract of $G$. Then there exists a linear continuous mapping $\varphi:C_p^*(A)\to C_p^*(X)$ such that $\varphi(y)|_A=y$ for every $y\in C_p^*(A)$.
\end{proposition}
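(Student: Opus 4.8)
The plan is to build the extender $\varphi$ in three natural stages, composing a retraction with a Tietze-style bounded extension and then "killing" the function outside $G$ by a cutoff. First I would use the hypotheses on $F$ and $G$: since $F$ is functionally closed and $G$ is functionally open with $F\subseteq G$, there is a continuous $\lambda:X\to[0,1]$ with $\lambda|_F\equiv 1$ and $\lambda|_{X\setminus G}\equiv 0$; this $\lambda$ is the cutoff that will let me transfer functions defined only on $G$ to all of $X$ while preserving their values on $A$ (because $A\subseteq F$, where $\lambda=1$).

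Next, let $r:G\to A$ be the given retraction. For $y\in C_p^*(A)$ the composition $y\circ r:G\to\mathbb R$ is continuous and bounded, and it agrees with $y$ on $A$ since $r|_A=\mathrm{id}_A$. Define $\varphi(y):X\to\mathbb R$ by $\varphi(y)(x)=\lambda(x)\,y(r(x))$ for $x\in G$ and $\varphi(y)(x)=0$ for $x\in X\setminus G$. On the open set $G$ this is visibly continuous; on the interior of $X\setminus G$ it is identically $0$; and continuity at the remaining boundary points follows because $\lambda\to 0$ there while $y\circ r$ stays bounded by $\|y\|_\infty$ — so $\varphi(y)$ is continuous on all of $X$ and $\|\varphi(y)\|_\infty\le\|y\|_\infty$, i.e. $\varphi(y)\in C_p^*(X)$. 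For $x\in A$ we have $x\in F$, so $\lambda(x)=1$ and $r(x)=x$, giving $\varphi(y)(x)=y(x)$; thus $\varphi(y)|_A=y$.

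Linearity of $\varphi$ in $y$ is immediate from the pointwise formula, since $y\mapsto \lambda(x)y(r(x))$ is linear for each fixed $x$. For continuity of $\varphi:C_p^*(A)\to C_p^*(X)$ with respect to the pointwise topologies, it suffices to check that $y\mapsto\varphi(y)(x)$ is continuous for each fixed $x\in X$: for $x\notin G$ it is the zero functional, and for $x\in G$ it is $y\mapsto\lambda(x)\,y(r(x))$, which is (a scalar multiple of) the evaluation of $y$ at the point $r(x)\in A$, hence continuous on $C_p^*(A)$. Since a basic neighborhood in $C_p^*(X)$ is determined by finitely many coordinates, joint continuity follows.

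The only genuinely delicate point is the continuity of $\varphi(y)$ at points $x_0\in\partial(X\setminus G)$ (equivalently $x_0\in\overline G\setminus G$), where $\varphi(y)$ is defined piecewise; this is where the functional-openness of $G$ is used, via the cutoff $\lambda$. I expect this boundary argument to be the main obstacle, but it is handled by the standard estimate $|\varphi(y)(x)|\le\lambda(x)\|y\|_\infty$, so that the functionally closed set $\{x:\lambda(x)\le\varepsilon/\|y\|_\infty\}$ is a neighborhood of $x_0$ on which $|\varphi(y)|\le\varepsilon$; together with continuity on the open set $G$ this gives continuity everywhere. The restriction of $\varphi$ to $C_p^*(A)$ landing in $C_p^*(X)$ (rather than merely $C_p(X)$) is automatic from the bound $\|\varphi(y)\|_\infty\le\|y\|_\infty$.
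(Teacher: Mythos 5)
Your proof is correct and follows essentially the same route as the paper's: choose a continuous cutoff $\lambda:X\to[0,1]$ with $\lambda|_F\equiv 1$ and $\lambda|_{X\setminus G}\equiv 0$, set $\varphi(y)(x)=\lambda(x)\,y(r(x))$ on $G$ and $0$ off $G$, and verify linearity and pointwise continuity. Your treatment is in fact slightly more careful than the paper's one-line argument, since you explicitly justify continuity of $\varphi(y)$ at the boundary points of $G$, where the boundedness of $y$ is genuinely used.
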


\begin{proof} Choose a retraction $r:G\to A$ and a continuous function $f:X\to [0,1]$ such that $F\subseteq f^{-1}(1)$ and $X\setminus G\subseteq f^{-1}(0)$. It remains to put $\varphi(y)(x)=f(x)\cdot y(r(x))$ for every $x\in X$.
\end{proof}

Let $X$ be a topological space and $\pi_X:X\to C_p(C_p(X))$, $\pi_X(x)(y)=y(x)$. We denote the linear subspace
$$
L(X)=\{\alpha_1\pi_X(x_1)+\cdots +\alpha_n\pi_X(x_n):n\in\omega,\,x_1,\dots ,x_n\in X,\, \alpha_1,\dots, \alpha_n\in\mathbb R\}.
$$
of $C_p(C_p(X))$ by $L(X)$. Analogously we put $\pi^*_X:X\to C_p(C^*_p(X))$, $\pi^*_X(x)(y)=y(x)$ and denote the linear subspace
$$
L^*(X)=\{\alpha_1\pi_X^*(x_1)+\cdots +\alpha_n\pi_X^*(x_n):n\in\omega,\,x_1,\dots ,x_n\in X,\, \alpha_1,\dots, \alpha_n\in\mathbb R\}.
$$
of $C_p(C^*_p(X))$ by $L^*(X)$.

\begin{proposition}\label{p:4.1} Let $X$ be a topological space and $A\subseteq X$. Consider the following conditions

$(i)$\,\,there exists a linear continuous mapping $\varphi:C_p(A)\to C_p(X)$ with $\varphi(y)|_A=y$ for every $y\in C_p(A)$;

$(ii)$\,\,there exists a continuous mapping $\psi:X\to L(A)$ with $\psi(a)=\pi_A(a)$ for every $a\in A$;

$(iii)$\,\,there exists a linear continuous mapping $\varphi:C_p^*(A)\to C_p(X)$ with $\varphi(y)|_A=y$ for every $y\in C_p^*(A)$;

$(iv)$\,\,there exists a continuous mapping $\psi:X\to L^*(A)$ with $\psi(a)=\pi^*_A(a)$ for every $a\in A$.

Then $(i)\Leftrightarrow (ii)\Rightarrow (iii)\Leftrightarrow (iv)$. If $A$ is homeomorphic to a topological vector space, then $(iii)\Rightarrow (ii)$ and all conditions $(i)-(iv)$ are equivalent to the following condition

$(v)$\,\,$A$ is a retract of $X$.
\end{proposition}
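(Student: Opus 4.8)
The plan is to establish the two ``vertical'' equivalences $(i)\Leftrightarrow(ii)$ and $(iii)\Leftrightarrow(iv)$ by the standard duality between $C_p$-type function spaces and their spaces of continuous linear functionals, to obtain $(i)\Rightarrow(iii)$ by a trivial restriction, and, under the topological-vector-space hypothesis, to close the circle through condition $(v)$.

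For $(i)\Leftrightarrow(ii)$: given a linear continuous $\varphi:C_p(A)\to C_p(X)$ with $\varphi(y)|_A=y$, define $\psi:X\to C_p(C_p(A))$ by $\psi(x)(y)=\varphi(y)(x)$. For each $x$, $\psi(x)$ is a linear functional on $C_p(A)$ (because $\varphi$ is linear) and it is continuous (because $y\mapsto\varphi(y)(x)$ is continuous); by the finite-support argument --- a continuous linear functional on $C_p(A)$ vanishes on every $y$ vanishing on a suitable finite $F\subseteq A$, hence factors through $y\mapsto y|_F$ --- every such functional lies in $L(A)$, so $\psi$ maps $X$ into $L(A)$. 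Continuity of $\psi:X\to L(A)\subseteq C_p(C_p(A))$ amounts to continuity of $x\mapsto\psi(x)(y)=\varphi(y)(x)$ for each $y$, which holds since $\varphi(y)\in C_p(X)$; and $\varphi(y)|_A=y$ translates precisely into $\psi(a)=\pi_A(a)$ for $a\in A$. Conversely, given such a $\psi$, the formula $\varphi(y)(x)=\psi(x)(y)$ reverses every step. Replacing $C_p$ by $C^*_p$, $L$ by $L^*$, and $\pi_A$ by $\pi^*_A$ throughout gives $(iii)\Leftrightarrow(iv)$ by the identical argument (the finite-support argument works verbatim for $C^*_p(A)$, since scalar multiples of bounded functions are bounded). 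Finally $(i)\Rightarrow(iii)$ is immediate: the pointwise topology on $C^*_p(A)$ is the subspace topology inherited from $C_p(A)$, so $\varphi|_{C^*_p(A)}:C^*_p(A)\to C_p(X)$ is again linear, continuous, and extends the identity on $A$.

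Now assume $A$ is homeomorphic to a topological vector space. The implication $(v)\Rightarrow(i)$ needs no hypothesis: a retraction $r:X\to A$ yields the linear continuous extender $\varphi(y)=y\circ r$. It therefore suffices to prove $(iii)\Rightarrow(v)$; together with $(v)\Rightarrow(i)\Leftrightarrow(ii)\Rightarrow(iii)$ this makes $(i)$--$(v)$ equivalent and in particular gives $(iii)\Rightarrow(ii)$. For $(iii)\Rightarrow(v)$, use $(iii)\Leftrightarrow(iv)$ to fix a continuous $\psi:X\to L^*(A)$ with $\psi|_A=\pi^*_A$, and transport the vector-space structure to $A$ via the given homeomorphism. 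Since the evaluations $\pi^*_A(x)$ at distinct points of $A$ are linearly independent (bounded continuous functions realize arbitrary prescribed values on any finite subset, $A$ being homeomorphic to a topological vector space and hence completely regular), the rule $\rho\bigl(\sum_i\alpha_i\pi^*_A(x_i)\bigr)=\sum_i\alpha_i x_i$ is a well-defined linear map $\rho:L^*(A)\to A$ with $\rho\circ\pi^*_A=\mathrm{id}_A$. Granting that $\rho$ is continuous, the composition $r:=\rho\circ\psi:X\to A$ is continuous and satisfies $r|_A=\rho\circ\pi^*_A=\mathrm{id}_A$, so $A$ is a retract of $X$.

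The main obstacle is exactly the continuity of $\rho:L^*(A)\to A$ --- equivalently, the universal (``freeness'') property of $L^*(A)$ with its weak-$*$ topology: that every continuous map from $A$ into a locally convex space, here $\mathrm{id}_A:A\to A$, extends to a continuous linear map on $L^*(A)$. By linearity of $\rho$ and translation-invariance of both topologies this reduces to continuity at $0$, where a basic neighbourhood of $0$ in $A$ must be compared with the weak-$*$ neighbourhoods of $0$ in $L^*(A)$; this again invokes the finite-support description of continuous linear functionals on $C^*_p(A)$ together with the structure of the topology of $A$ near $0$. Everything else is routine bookkeeping with the pairing $\psi(x)(y)=\varphi(y)(x)$.
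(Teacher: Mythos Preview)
Your treatment of $(i)\Leftrightarrow(ii)$, $(iii)\Leftrightarrow(iv)$, $(i)\Rightarrow(iii)$ and $(v)\Rightarrow(i)$ is correct and agrees with the paper. The genuine gap is in your route $(iv)\Rightarrow(v)$: the barycenter map $\rho:L^*(A)\to A$, $\rho\bigl(\sum_i\alpha_i\pi^*_A(x_i)\bigr)=\sum_i\alpha_i x_i$, is in general \emph{not} continuous, so the ``main obstacle'' you flag is an actual obstruction, not bookkeeping. For $A=\mathbb{R}$ take $z_n=\tfrac{1}{n}\pi^*_A(n^2)$: for every bounded continuous $y$ one has $z_n(y)=\tfrac{1}{n}y(n^2)\to 0$, hence $z_n\to 0$ in $L^*(A)\subseteq C_p(C^*_p(A))$, while $\rho(z_n)=n\to\infty$. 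The finite-support description tells you what elements of $L^*(A)$ look like, but a weak-$*$ neighbourhood of $0$ in $L^*(A)$ is determined by finitely many \emph{bounded} test functions, and those cannot detect the size of $\rho(z)$ once the support of $z$ drifts to infinity with shrinking coefficients.

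The paper does not attempt $(iv)\Rightarrow(v)$ directly. Under the TVS hypothesis it first argues $(iv)\Rightarrow(ii)$, asserting that the restriction map $\phi:L(A)\to L^*(A)$, $\phi(z)=z|_{C^*_p(A)}$, is a homeomorphism, and only afterwards produces a retraction from the resulting $\psi:X\to L(A)$. The conceptual point of passing through $L(A)$ rather than $L^*(A)$ is that unbounded continuous functions on $A$ --- in particular continuous linear functionals, when $A$ is locally convex --- become available as test functions, which is exactly what your $\rho$ on $L^*(A)$ lacks. (The paper's own argument here is extremely terse, and in fact the same sequence $z_n$ shows that $\phi^{-1}$ fails to be continuous for $A=\mathbb{R}$, so the homeomorphism claim as literally stated also needs care; but if you want to repair your argument, the route through $L(A)$ is the one to pursue, not a direct attack on continuity of $\rho:L^*(A)\to A$.)
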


\begin{proof} $(i)\Rightarrow(ii)$. We consider the continuous mapping $\psi:X\to C_p(C_p(A))$, $\psi(x)(y)=\varphi(y)(x)$. It remains to note that according to \cite[Theorem IV.1.2]{Sch}, for every $x\in X$ there exists $z\in L(A)$ such that $\psi(x)=z$.

$(ii)\Rightarrow(i)$. It is sufficient to put $\varphi(y)(x)=\psi(x)(y)$ for every $x\in X$ and $y\in C_p(A)$.

The implication $(iii)\Leftrightarrow(iv)$ can be proved similarly.

Let $A$ is homeomorphic to a topological vector space. Without loss of the generality we can assume that $A$ is a topological vector space. Then the implication $(iv)\Rightarrow(ii)$ follows immediately from the fact that the mapping $\phi:L(A)\to L^*(A)$, $\phi(z)=z|_{C_p^*(A)}$, is a homeomorphism. Moreover, the mapping $r:X\to A$,
$$
 r(x)=\left\{\begin{array}{ll}
                         a, & x\in X\setminus A\,\,{\rm and}\,\,\psi(x)=\pi_A(a)\\
                         x, & x\in A
                       \end{array}
 \right.
$$
is continuous.
\end{proof}

We say that a subset $A$ of a topological space $X$ is {\it a $L$-retract in $X$}, if there exists a continuous mapping $\psi:X\to L(A)$ with $\psi(a)=\pi_A(a)$ for every $a\in A$.

A subset $A$ of a topological space $X$ is called {\it well-covered in $X$}, if there exists a sequence of locally finite functionally open covers $(U(n,i):i\in I_n)$ of $A$ in $X$ and sequence of families $(\lambda_{n,i}:i\in I_n)$ of continuous mappings $\lambda_{n,i}:\overline{U(n,i)}\to A$ such that for every $a\in A$ and every neighborhood $U$ of $a$ in $A$ there exist a $n_0\in \omega$ and a neighborhood $U_0$ of $a$ in $X$ such that $\lambda_{n,i}(U(n,i)\cap U_0)\subseteq U$ for every $n\geq n_0$.

\begin{theorem}\label{th:4.3}
Let $X$ be a topological space and $A$ be an well-covered functionally closed subset of $X$. Then $A$ is  a $L$-retract in $X$.
\end{theorem}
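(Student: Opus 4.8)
The plan is to construct the required continuous map $\psi\colon X\to L(A)$ by a diagonal argument: near $A$ one averages the evaluation functionals at the points $\lambda_{n,i}(x)\in A$ by means of partitions of unity subordinate to the covers $(U(n,i))_{i\in I_n}$, and one lets the ``level'' $n$ used at a point $x$ grow to infinity as $x\to A$, while $\psi(x)$ is the zero element of $L(A)$ far from $A$. The well-covered hypothesis is exactly what will force continuity at the points of $A$.

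\textbf{Set-up.} Fix a continuous $g\colon X\to[0,1]$ with $A=g^{-1}(0)$, and for each $n$ put $W_n=\bigcup_{i\in I_n}U(n,i)$; this is a functionally open set containing $A$ (a locally finite union of cozero-sets is a cozero-set), so it carries a partition of unity $(p_{n,i})_{i\in I_n}$ with $\{p_{n,i}\neq0\}\subseteq U(n,i)$ and $\sum_i p_{n,i}\equiv1$ on $W_n$. Using only that disjoint zero-sets of $X$ are completely separated — so that no normality of $X$ is needed — choose cozero-sets $V_n$ with $A\subseteq V_n$, $\overline{V_{n+1}}\subseteq V_n$, $V_n\subseteq W_1\cap\dots\cap W_n$ and $\bigcap_nV_n=A$ (e.g.\ also shrink each $V_n$ into $\{g<1/n\}$), and continuous $v_n\colon X\to[0,1]$ with $\{v_n\neq0\}=V_n$ and $v_n\equiv1$ on a neighbourhood of $A$. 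Since the $V_n$ decrease to $A$ with $\overline{V_{n+1}}\subseteq V_n$, the family $(V_n)_n$ is locally finite in $X\setminus A$; hence $\ell:=\sum_n v_n$ is a well-defined continuous function on $X\setminus A$ with $\ell(x)\to\infty$ as $x\to a\in A$, and, because $v_n(x)>0$ forces $x\in V_n\subseteq W_1\cap\dots\cap W_n$, the levels $\lfloor\ell(x)\rfloor$ and $\lfloor\ell(x)\rfloor+1$ are always available at $x$ (after a harmless rescaling of $\ell$ to absorb an off-by-one).

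\textbf{The map.} Put $\psi(a)=\pi_A(a)$ for $a\in A$, and for $x\in X\setminus A$, with $m=\lfloor\ell(x)\rfloor$ and $t=\ell(x)-m$,
\[
 \psi(x)=(1-t)\,\psi_m(x)+t\,\psi_{m+1}(x),\qquad \psi_n(x):=\sum_{i\in I_n}p_{n,i}(x)\,\pi_A(\lambda_{n,i}(x)),
\]
with the convention $\psi_0\equiv0$. Each $\psi(x)$ is a finite convex combination of evaluation functionals, hence lies in $L(A)$. Continuity of $\psi$ on $X\setminus A$ is routine: near $x_0\notin A$ only finitely many levels occur, and for each of them only finitely many $U(n,i)$ meet a neighbourhood of $x_0$, so $x\mapsto\psi(x)(y)$ is locally a finite sum of products of continuous functions — the key point being that there is no blow-up even when $x_0$ lies on the boundary of some $W_n$ and the weight of level $n$ vanishes, precisely because each $\lambda_{n,i}$ is continuous on the \emph{closure} $\overline{U(n,i)}$ and the covers are locally finite.

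\textbf{Continuity at $a\in A$ (the crux).} A basic neighbourhood of $\pi_A(a)$ in $L(A)\subseteq C_p(C_p(A))$ is determined by finitely many $y_1,\dots,y_k\in C_p(A)$ and an $\ve>0$; put $U=\{b\in A:\abs{y_j(b)-y_j(a)}<\ve\ (j\le k)\}$, a neighbourhood of $a$ in $A$. By the definition of ``well-covered'' there are $n_0\in\omega$ and a neighbourhood $U_0$ of $a$ in $X$ with $\lambda_{n,i}(U(n,i)\cap U_0)\subseteq U$ for all $n\ge n_0$ and all $i$; shrinking $U_0$ we may assume in addition that $\ell>n_0$ on $U_0\setminus A$. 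Then for $x\in U_0\setminus A$ the levels $m,m+1$ are $\ge n_0$, every $U(n,i)$ with $p_{n,i}(x)>0$ satisfies $x\in U(n,i)\cap U_0$, hence $\lambda_{n,i}(x)\in U$, so $\psi(x)(y_j)$ is a convex combination of numbers $y_j(\lambda_{n,i}(x))$ each within $\ve$ of $y_j(a)=\pi_A(a)(y_j)$; thus $\abs{\psi(x)(y_j)-\pi_A(a)(y_j)}<\ve$, trivially also for $x\in U_0\cap A$. Hence $\psi$ is continuous at $a$, so $\psi\colon X\to L(A)$ is continuous with $\psi|_A=\pi_A$, i.e.\ $A$ is an $L$-retract in $X$. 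The main obstacle is the organizational one of the set-up paragraph — building the cozero-sets $V_n$ and the level function $\ell$ so that $\psi$ is simultaneously globally well-defined, continuous off $A$, and forced to use arbitrarily high levels near $A$; once that scaffolding is in place, the well-covered hypothesis does exactly what it was designed for.
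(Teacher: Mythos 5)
Your proof is correct and follows essentially the same route as the paper's: level-$n$ averages $\sum_{i}p_{n,i}(x)\,\pi_A(\lambda_{n,i}(x))$ of evaluation functionals built from partitions of unity subordinate to the covers, interpolated between two consecutive levels with the level forced to infinity as $x$ approaches $A$, and the well-covered condition invoked exactly as you do to get continuity at points of $A$. The only difference is organizational: the paper realizes the interpolation via nested functionally open/closed sets $A\subseteq F_{n+1}\subseteq W_n\subseteq F_n\subseteq G_n$ and transition functions $\delta_n$ on the annuli $W_n\setminus W_{n+1}$, whereas you encode the same data in a single level function $\ell$ and its integer and fractional parts.
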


\begin{proof} We choose a sequence of locally finite functionally open covers $(U(n,i):i\in I_n)$ of $A$ in $X$ and sequence of continuous mappings $(\lambda_{n,i}:i\in I_n)$ of continuous mappings $\lambda_{n,i}:U(n,i)\to A$ which satisfy the condition from the definition of well-covered set. Note that every set $G_n=\bigcup\limits_{i\in I_n}U(n,i)$ is functionally open in $X$. For every $n\in \omega$ we choose a partition of the unit $(\phi_{n,i}:i\in I_n)$ on $G_n$ which is subordinated to $(U(n,i):i\in I_n)$. For a fixed $n\in \omega$ and $i\in I_n$ we consider the mapping $\mu_{n,i}:X\to L(A)$,
$$
\mu_{n,i}(x)=\left\{\begin{array}{ll}
                         \phi_{n,i}(x)\pi_A(\lambda_{n,i}(x)), & x\in U(n,i)\\
                         0, & x\in X\setminus U(n,i).
                       \end{array}
 \right.
$$
Note that $\mu_{n,i}(x)= \phi_{n,i}(x)\pi_A(\lambda_{n,i}(x))$ for every $x\in \overline{U(n,i)}$. Therefore the restrictions of $\mu_{n,i}$ on the closed sets $\overline{U(n,i)}$ is continuous. Thus $\mu_{n,i}$ is continuous too.

 Now we choose sequences of functionally open in $X$ sets $W_n$ and functionally closed in $X$ sets $F_n$ such that $A\subseteq F_{n+1}\subseteq W_n\subseteq F_n\subseteq G_n$ for every $n\in \omega$, and a sequence of continuous functions $\delta_n:X\to[0,1]$ such that $\delta_n(X\setminus W_n)=\{0\}$ and $\delta_n(F_{n+1})=\{1\}$. Let the mapping $\psi_0:X\to L(A)$ is defined by $\psi_0(x)=0$. Moreover, for every $n\in \omega$ the mapping $\psi_n:G_n\to L(A)$ is defined by $\psi_n(x)=\sum\limits_{i\in I_n}\mu_{n,i}(x)$. Obviously, all mappings $\psi_n$ are continuous.
Now we consider the mapping $\psi:X\to L(A)$,
$$
\psi(x)=\left\{\begin{array}{lll}
                         \psi_0(x), & x\in X\setminus W_1\\
                         (1-\delta_n(x))\psi_{n-1}(x)+ \delta_n(x)\psi_n(x), & n\in \omega,\,\,x\in W_n\setminus W_{n+1}\\
                         \pi_X(x), & x\in A.
                       \end{array}
 \right.
$$
It is clear that $\psi$ is continuous at every point $x\in X\setminus A=(X\setminus W_1)\cup\bigcup\limits_{n=1}^\infty(W_n\setminus W_{n+1})$.

Fix a point $x_0\in A$ and show that $\psi$ is continuous at $x_0$. It is sufficient to prove that for every $y\in C_p(X)$ there exists a neighborhood $U$ of $x_0$ in $X$ such that $|\psi(x)(y)-\psi(x_0)(y)|<1$ for each $x\in U$.

Let $y_0\in C_p(X)$ and $U$ be a neighborhood of $x_0$ in $X$ such that $|y_0(x)-y_0(x_0)|<1$ for every $x\in U$. According to the choice of $U(n,i)$ and $\lambda_{n,i}$, there exist a $n_0\in \omega$ and a neighborhood $U_0$ of $x_0$ such that $\lambda_{n,i}(U(n,i)\cap U_0)\subseteq U$ for every $n\geq n_0$. Then for every $n\geq n_0$ and $x\in U_0\cap G_n$ we have
$$
|\psi_n(x)(y_0)-\psi(x_0)(y_0)|=|\sum\limits_{i\in I_n}\phi_{n,i}(x)\pi_A(\lambda_{n,i}(x))(y_0)-y_0(x_0)|=
$$
$$
=|\sum\limits_{i\in I_n}\phi_{n,i}(x)y_0(\lambda_{n,i}(x))-y_0(x_0)|\leq \sum\limits_{i\in I_n}\phi_{n,i}(x)|y_0(\lambda_{n,i}(x))-y_0(x_0)|<
$$
$$
<\sum\limits_{i\in I_n}\phi_{n,i}(x)=1.
$$
Now it is easy to see that $|\psi(x)(y_0)-\psi(x_0)(y_0)|<1$ for each $x\in U_0\cap W_{n_0+1}$.
\end{proof}

The following proposition shows that Theorem \ref{th:4.3} is a generalization of Theorem 4.3 from \cite{Bor}.

\begin{proposition}\label{pr:4.4}
Let $X$ be a stratifiable space. Then every closed in $X$ set $A\subseteq X$ is well-covered in $X$.
\end{proposition}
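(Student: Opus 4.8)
The plan is to reconstruct, in the ``covers and partial maps'' language of the definition of a well-covered set, the machinery behind Borges' linear extension theorem.

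First I would collect the consequences of stratifiability to be used. A stratifiable space is paracompact and perfectly normal; hence in $X$ open $=$ functionally open and closed $=$ functionally closed, so the word ``functionally'' in the definition costs nothing, and every cover of $A$ by sets open in $X$ has a locally finite (in $X$) open refinement still covering $A$. It is also worth isolating a clean sufficient condition that shows where the difficulty lies: if $A$ admits even one locally finite (in $X$) open cover $\{U(i):i\in I\}$ by sets open in $X$, together with continuous maps $\lambda_i\colon\overline{U(i)}\to A$ fixing $A\cap\overline{U(i)}$ pointwise, then $A$ is well-covered with $\mathcal U_n$ and $\lambda_{n,i}$ independent of $n$: given $a\in A$ and a neighbourhood $U=W\cap A$ of $a$ in $A$ ($W$ open in $X$), only finitely many $U(i)$ meet a small neighbourhood of $a$, and continuity at $a$ of these finitely many $\lambda_i$ — together with local finiteness, used to discard the members not containing $a$ in their closures — yields a single $U_0$ good for all $n$. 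This holds whenever $A$ is a neighbourhood retract, but it fails for general closed $A$ in a stratifiable $X$ (already for a convergent sequence $A\subseteq\mathbb R$, where near the limit point the required partial retractions do not exist), so one genuinely needs covers that vary with $n$.

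Next I would carry out the general construction. Fixing a stratification of $X$ and using perfect normality to present $X\setminus A$ as $\bigcup_nG_n$ with $G_n$ open and $\overline{G_n}\subseteq G_{n+1}\subseteq X\setminus A$, one obtains dual open neighbourhoods $A[n]:=X\setminus\overline{G_n}$ of $A$ with $A\subseteq A[n+1]$, $\overline{A[n+1]}\subseteq A[n]$, $\bigcap_nA[n]=A$ and, by monotonicity of the stratification, $B\subseteq C$ closed $\Rightarrow B[n]\subseteq C[n]$. For each $n$ the family $\{\{a\}[n]:a\in A\}$ covers $A$; I would take a locally finite (in $X$) open refinement $\mathcal U_n=\{U(n,i):i\in I_n\}$ still covering $A$, with $U(n,i)\cap A\neq\emptyset$ and $U(n,i)\subseteq\{a_{n,i}\}[n]$ for a recorded point $a_{n,i}\in A$, and define $\lambda_{n,i}\colon\overline{U(n,i)}\to A$ as in Borges' extender — a constant map to a point of $A$ selected via the monotone co-stratification on members whose $A$-trace is too rigid to admit a nonconstant continuous map into $A$, a stratification-supplied partial retraction on the rest — arranged so that $\lambda_{n,i}(x)$ always ``tracks'' $x$ through the decreasing neighbourhoods $\{\,\cdot\,\}[m]$.

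For the verification: given $a\in A$ and $U=W\cap A$ ($W$ open in $X$, $a\in W$), I would use $\bigcap_nA[n]=A$, the monotonicity $B\subseteq C\Rightarrow B[n]\subseteq C[n]$, and local finiteness of the finitely many covers involved to pick $n_0$ and set $U_0:=\{a\}[n_0]$, so that for all $n\ge n_0$ every member $U(n,i)$ meeting $U_0$ is — by the $n$-smallness built into $\mathcal U_n$ — carried by $\lambda_{n,i}$ into $W$, giving $\lambda_{n,i}(U(n,i)\cap U_0)\subseteq W\cap A=U$, which is the defining condition.

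The main obstacle, as the first paragraph suggests, is precisely the simultaneous choice of the $\lambda_{n,i}$ and the $n$-smallness of the covers: one cannot let the covers $\mathcal U_n$ shrink to every point of $A$ as $n\to\infty$, since that would make the closed (hence stratifiable) subspace $A$ developable, hence metrizable, which is false in general — an uncountable wedge of arcs is a closed subspace of itself. So the refinement ``towards $A$'' can be done only where $A$ is locally small, and elsewhere the maps $\lambda_{n,i}$ must carry the burden of being identity-like via partial retractions; making these two regimes fit together so that the diagonal condition holds at every point of $A$ at once is the technical core, and is exactly where stratifiability is used beyond paracompactness and perfect normality — it amounts to a repackaging of Borges' construction of the linear extender $C_p(A)\to C_p(X)$.
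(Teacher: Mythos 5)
Your setup is right --- the covers $\mathcal U_n$ should indeed be locally finite open refinements of $(G(n,\{a\}):a\in A)$, where $H\mapsto G(n,H)$ is the monotone stratification (decreasing in $n$) --- but your proof never actually produces the maps $\lambda_{n,i}$. You gesture at a two-regime construction (``a constant map \dots\ on members whose $A$-trace is too rigid \dots, a stratification-supplied partial retraction on the rest'') and then declare in your last paragraph that fitting the two regimes together is ``the technical core''; that core is exactly what is missing, and, as you yourself observe via the convergent-sequence example, partial retractions onto $A$ need not exist, so the second regime cannot be supplied. The idea you are missing is that no retraction-like behaviour is needed at all: having chosen $x(n,i)\in A$ with $U(n,i)\subseteq G(n,\{x(n,i)\})$, one simply takes $\lambda_{n,i}$ to be the \emph{constant} map with value $x(n,i)$, on every member of every cover. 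Monotonicity of the stratification then does all the work in the verification: given $x_0\in A$ and an open $U\ni x_0$, pick $n_0$ with $x_0\notin\overline{G(n_0,X\setminus U)}$ and put $U_0=X\setminus\overline{G(n_0,X\setminus U)}$. If $n\ge n_0$ and $U(n,i)$ meets $U_0$, then $G(n,\{x(n,i)\})\not\subseteq G(n_0,X\setminus U)\supseteq G(n,X\setminus U)$; since $x(n,i)\notin U$ would force $G(n,\{x(n,i)\})\subseteq G(n,X\setminus U)$ by monotonicity, we get $x(n,i)\in U$, i.e.\ $\lambda_{n,i}(U(n,i)\cap U_0)=\{x(n,i)\}\subseteq U$.

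Note also that your choice $U_0=\{a\}[n_0]=G(n_0,\{a\})$ in the verification is the wrong neighbourhood: $G(n_0,\{a\})$ may well meet $G(n,X\setminus U)$ for every $n$, so nothing prevents a member $U(n,i)$ meeting it from having its tag $x(n,i)$ outside $U$. The correct neighbourhood comes from applying the stratification to the closed set $X\setminus U$ rather than to $\{a\}$ and taking the complement of the closure. Your closing remarks about developability and the need for ``identity-like'' maps are therefore red herrings: the covers do not shrink metrically, they shrink relative to the stratification of singletons, and that is already enough for constant $\lambda_{n,i}$'s to satisfy the defining condition of a well-covered set.
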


\begin{proof} According to the definition of stratifiable space there exists a mapping $G$ which assigns to each $n\in \omega$ and a closed subset $H\subseteq X$, an open set $G(n,H)$ contained $H$ such that

$(1)$\,\, $H=\bigcap_{n\in \omega} \overline{G(n,H)}$;

$(2)$\,\, $G(n,H)\subseteq G(n,K)$ for every closed subsets $H\subseteq K\subseteq X$ and $n\in \omega$.

Note that without loss of the generality we may assume that $G(n+1,H)\subseteq G(n,H)$ for all $n\in \omega$ and all open sets $H\subseteq X$. Let $A$ be a closed subset of $X$. For every $n\in \omega$ we choose a locally finite in $X$ open refinement $(U(n,i):i\in I_n)$ of $(G(n,\{x\}):x\in A)$. For every $n\in \omega$ and $i\in I_n$ we choose $x(n,i)\in A$ with $U(n,i)\subseteq G(n,\{x(n,i)\})$ and put $\lambda_{n,i}(x)=x(n,i)$ for every $x\in \overline{U(n,i)}$. We show that sequences of covers $(U(n,i):i\in I_n)$ and families $(\lambda_{n,i}:i\in I_n)$ satisfy the condition from the definition of well-covered set.

Let $x_0\in A$ and $U$ be an open neighborhood of $x_0$. We choose $n_0\in \omega$ such that $x_0\not\in \overline{G(n_0, X\setminus U)}$ and put $U_0=X\setminus \overline{G(n_0, X\setminus U)}$. Let $n\geq n_0$ and $i\in I_n$ with $U(n,i)\cap U_0\ne \emptyset$. Since $U(n,i)\subseteq G(n,\{x(n,i)\})$, $G(n,\{x(n,i)\})\cap U_0\ne \emptyset$, i.e. $G(n,\{x(n,i)\})\not\subseteq G(n_0, X\setminus U)$. Taking into account that $G(n, X\setminus U)\subseteq G(n_0, X\setminus U)$ we obtain that $G(n,\{x(n,i)\})\not\subseteq G(n, X\setminus U)$. Thus, $x_{n,i}\in U$ according to $(2)$.
\end{proof}

\begin{remark}\label{r:4.5}
The Sorgenfrey line $\mathbb S$ is an example of a perfectly normal non-stratifiable space in which every closed subset is well-covered in $\mathbb S$.
\end{remark}

\bibliographystyle{amsplain}

\end{document}